\newtheorem{theorem}{Theorem}[section]
\newtheorem{corollary}[theorem]{Corollary}
\theoremstyle{definition}
\numberwithin{equation}{section}
 \theoremstyle{plain}
 \numberwithin{equation}{section} 
 \numberwithin{figure}{section} 
 \theoremstyle{plain}
 \theoremstyle{remark}
 \newtheorem*{acknowledgement*}{Acknowledgement}
\newcommand{\cF}{{\mathcal F}}
\newcommand{\Om}{{\Omega}}
\newcommand{\ve}{{\varepsilon}}
\newcommand{\del}{{\delta}}
\newcommand{\gam}{{\gamma}}
\newcommand{\Gam}{{\Gamma}}
\newcommand{\Sig}{{\Sigma}}
\newcommand{\sig}{{\sigma}}
\newcommand{\al}{{\alpha}}
\newcommand{\ka}{{\kappa}}
\newcommand{\La}{{\Lambda}}
\newcommand{\bbR}{{\mathbb R}}
\begin{document}
\title[]{Iterated ergodic theorems and Erd\" os--R\' enyi Law of Large Numbers}%
 \vskip 0.1cm
 \author{ Yuri Kifer\\
\vskip 0.1cm
 Institute  of Mathematics\\
Hebrew University\\
Jerusalem, Israel}%
\address{
Institute of Mathematics, The Hebrew University, Jerusalem 91904, Israel}
\email{ kifer@math.huji.ac.il}%

\thanks{ }
\subjclass[2000]{Primary: 34C29 Secondary: 60F15, 60L20}%
\keywords{ergodic theorems, iterated sums and integrals,
 stationary processes, dynamical systems.}%
\dedicatory{  }
 \date{\today}
\begin{abstract}\noindent
We obtain ergodic theorems and a version of the Erd\" os--Renyi law of large numbers 
for multiple iterated sums and integrals 
of the form $\Sigma^{(\nu)}(t)=\sum_{0\leq k_1<...<k_\nu\leq t}\xi(k_1)\otimes\cdots\otimes\xi(k_\nu)$,
$t\in[0,T]$ and $\Sigma^{(\nu)}(t)=\int_{0\leq s_1\leq...\leq s_\nu\leq t}\xi(s_1)\otimes\cdots\otimes\xi(s_\nu)ds_1\cdots ds_\nu$ where $\{\xi(k)\}_{-\infty<k<\infty}$ and $\{\xi(s)\}_{-\infty<s<\infty}$ are vector processes for which standard ergodic theorems, i.e. when $\nu=1$,  hold true. 

\end{abstract}
\maketitle
\markboth{Yu.Kifer}{Iterated ergodic theorems}
\renewcommand{\theequation}{\arabic{section}.\arabic{equation}}
\pagenumbering{arabic}

\section{Introduction}\label{sec1}\setcounter{equation}{0}

Let $\{\xi(k)\}_{-\infty<k<\infty}$ and $\{\xi(t)\}_{-\infty<t<\infty}$ be discrete and continuous time $d$-dimensional
 stochastic processes.
The collections or sums of multiple iterated sums
\begin{equation}\label{1.1}
\Sig^{(\nu)}(n)=\sum_{0\leq k_1<...<k_\nu<n}\xi(k_1)\otimes\cdots\otimes\xi(k_\nu),
\end{equation}
in the discrete time, and iterated integrals
\begin{equation}\label{1.2}
\Sig^{(\nu)}(t)=\int_{0\leq u_1\leq...\leq u_\nu\leq t}\xi(u_1)\otimes\cdots\otimes\xi(u_\nu)du_1\cdots du_\nu,
\end{equation}
in the continuous time, were called signatures in recent papers related to the rough paths theory, data science, machine learning and neural networks (see, for instance, \cite{BFT}, \cite{DET} and references there).

In this paper we will derive ergodic theorems for the above iterated sums and integrals, showing that
 as $t\to\infty$ the normalized expressions $S^{(\nu)}(t)=t^{-\nu}\Sig^{(\nu)}(t)$ for $\nu>1$ converge provided
  the corresponding limits exist when $\nu=1$. This enables us to derive at the end a version of the Erd\" os--Renyi 
  law of large numbers for iterated sums and integrals. Observe that in
 \cite{Ki23} and \cite{Ki25} we obtained strong moment and almost sure limit theorems which, of course, imply
 ergodic theorems type limits but this requires quite strong weak dependence in time conditions
 on stochastic processes while here we obtain iterated ergodic theorems under,
  essentially, the same assumptions as in the classical $\nu=1$ case.
 
 We observe that there is a series of papers on ergodic theorems for $U$-statistics (see, for instance, 
  \cite{G24} and references there) but the assumptions there do not cover our setup here and the methods 
 here and there are quite different.

\section{Preliminaries and main results}\label{sec2}\setcounter{equation}{0}

\subsection{Iterated sums}\label{subsec2.1}

Let $\xi(0),\,\xi(1),\,\xi(2),...$ be a sequence of $d$-dimensional vectors $\xi(k)=(\xi_1(k),...,\xi_d(k)),\, k=0,1,...$.
We will consider iterated sums of the form
\begin{equation}\label{2.1}
\Sig^{(\nu)}(n)=\sum_{0\leq k_1<...<k_\nu<n}\xi(k_1)\otimes\cdots\otimes\xi(k_\nu)
\end{equation}
where the left hand side can be viewed as a $d^\nu$--dimensional vector with coordinates
\begin{equation}\label{2.2}
\Sig^{i_1,...,i_\nu}(n)=\sum_{0\leq k_1<...<k_\nu<n}\xi_{i_1}(k_1)\xi_{i_2}(k_2)\cdots\xi_{i_\nu}(k_\nu),
\end{equation}
$1\leq i_1,...,i_\nu\leq d$. We will prove

\begin{theorem}\label{thm2.1}
Suppose that
\begin{equation}\label{2.3}
\limsup_{n\to\infty} n^{-1}\sum_{k=0}^n|\xi(k)|=R<\infty
\end{equation}
and the limit
\begin{equation}\label{2.4}
\lim_{n\to\infty}n^{-1}\sum_{k=0}^n\xi(k)=Q=(Q_1,...,Q_d)
\end{equation}
exists with $|Q|<\infty$. Then for any $1\leq i_1,...,i_\nu\leq d$,
\begin{equation}\label{2.5}
\lim_{n\to\infty}n^{-\nu}\Sig^{i_1,...,i_\nu}(n)=\frac 1{\nu !}\prod_{j=1}^\nu Q_{i_j}.
\end{equation}
\end{theorem}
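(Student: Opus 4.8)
The plan is to argue by induction on the order $\nu$. The base case $\nu=1$ is exactly hypothesis \eqref{2.4}, since $n^{-1}\Sig^{i_1}(n)=n^{-1}\sum_{k=0}^{n-1}\xi_{i_1}(k)$ and deleting the single boundary term does not affect the Ces\`aro limit. For the inductive step I would first record the recursion obtained by summing over the largest index $k_\nu=m$:
\begin{equation}\label{recur}
\Sig^{i_1,\ldots,i_\nu}(n)=\sum_{m=0}^{n-1}\xi_{i_\nu}(m)\,\Sig^{i_1,\ldots,i_{\nu-1}}(m).
\end{equation}
The inductive hypothesis asserts that $\Sig^{i_1,\ldots,i_{\nu-1}}(m)=(P+o(1))\,m^{\nu-1}$ as $m\to\infty$, where $P=\frac1{(\nu-1)!}\prod_{j=1}^{\nu-1}Q_{i_j}$, so the whole problem reduces to understanding a \emph{weighted} average of $\xi_{i_\nu}$ against the weights $m^{\nu-1}$.

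The technical heart of the argument is an elementary weighted-ergodic (Abel summation) lemma that I would isolate first: if $a_0,a_1,\ldots$ are scalars with $n^{-1}\sum_{k=0}^{n-1}a_k\to L$, then for every integer $p\ge0$ one has $n^{-(p+1)}\sum_{k=0}^{n-1}k^p a_k\to L/(p+1)$. This follows by summation by parts: writing $S_k=\sum_{j=0}^{k-1}a_j$ one gets $\sum_{k=0}^{n-1}k^p a_k=(n-1)^p S_n-\sum_{k=1}^{n-1}\bigl(k^p-(k-1)^p\bigr)S_k$, and inserting $S_k=Lk+o(k)$ together with $k^p-(k-1)^p\sim p\,k^{p-1}$ gives the boundary contribution $Ln^{p+1}$ minus $\tfrac{p}{p+1}Ln^{p+1}$, which is $\tfrac{L}{p+1}n^{p+1}$. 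Applying this with $a_m=\xi_{i_\nu}(m)$, $p=\nu-1$ and $L=Q_{i_\nu}$ (legitimate by \eqref{2.4}) shows that the principal term $P\,n^{-\nu}\sum_{m=0}^{n-1}m^{\nu-1}\xi_{i_\nu}(m)$ converges to $PQ_{i_\nu}/\nu$.

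It remains to check that replacing $\Sig^{i_1,\ldots,i_{\nu-1}}(m)$ by $Pm^{\nu-1}$ in \eqref{recur} is harmless, and this is the step I expect to be the main obstacle, since \eqref{2.4} only supplies convergence of the \emph{ordinary} Ces\`aro average while the factor $m^{\nu-1}$ forces weighted averages and amplifies fluctuations. Writing $\Sig^{i_1,\ldots,i_{\nu-1}}(m)=(P+\epsilon_m)m^{\nu-1}$ with $\epsilon_m\to0$, the error is bounded by $n^{-\nu}\sum_{m=0}^{n-1}|\epsilon_m|\,m^{\nu-1}|\xi_{i_\nu}(m)|$. Given $\delta>0$ I would pick $M$ with $|\epsilon_m|<\delta$ for $m\ge M$; the finite head is $O(n^{-\nu})\to0$, while the tail is at most $\delta\,n^{-\nu}\sum_{m=0}^{n-1}m^{\nu-1}|\xi_{i_\nu}(m)|$. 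Here hypothesis \eqref{2.3} is exactly what is needed: the crude bounds $m^{\nu-1}\le (n-1)^{\nu-1}$ and $|\xi_{i_\nu}(m)|\le|\xi(m)|$ give $\limsup_n n^{-\nu}\sum_{m=0}^{n-1}m^{\nu-1}|\xi_{i_\nu}(m)|\le R$, so the tail is $\le\delta R+o(1)$ and vanishes as $\delta\to0$. Collecting the terms yields $n^{-\nu}\Sig^{i_1,\ldots,i_\nu}(n)\to PQ_{i_\nu}/\nu=\frac1{\nu!}\prod_{j=1}^{\nu}Q_{i_j}$, which closes the induction.
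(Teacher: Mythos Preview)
Your proof is correct and follows essentially the same route as the paper: induction on $\nu$ via the recursion $\Sig^{i_1,\ldots,i_\nu}(n)=\sum_{m<n}\xi_{i_\nu}(m)\Sig^{i_1,\ldots,i_{\nu-1}}(m)$, splitting into a main term and an $\epsilon_m$-error, Abel summation by parts to handle the weighted average $n^{-\nu}\sum_{m<n}m^{\nu-1}\xi_{i_\nu}(m)$, and invoking hypothesis \eqref{2.3} to kill the error. The only cosmetic difference is that the paper applies a second Abel summation to the error term $I_2$ whereas you dispose of it by the cruder bound $m^{\nu-1}\le(n-1)^{\nu-1}$ and a head/tail split; both arguments are equivalent in strength.
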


The main application of the above theorem is to the case when $\xi(k),\, k\geq 0$ is a stationary vector 
stochastic process on a probability space $(\Om,\cF,P)$ with
\begin{equation}\label{2.6}
E|\xi(0)|<\infty.
\end{equation}
The process $\xi$ can be represented by a $P$-preserving dynamical system $F:\Om\to\Om$ and a measurable 
function $g$ on $\Om$ so that $\xi(k)=g\circ F^k$. If (\ref{2.6}) holds true then (\ref{2.3}) and (\ref{2.4})
 are satisfied with probability one by the Birkhoff ergodic theorem (see, for instance, \cite{Kal}
 or \cite{Wal}), and so Theorem \ref{thm2.1} yields (\ref{2.5}) $P$-almost surely in this case.
 Thus, we have
 \begin{corollary}\label{cor2.2}
 Let $\xi(k)=(\xi_1(k),...,\xi_d(k)),\, k\geq 0$ be a stationary $d$-dimensional stochastic process such that
 (\ref{2.6}) holds true. Let $Q$ be an almost sure random (in general) vector limit in (\ref{2.4}) which exists by 
 the Birkhoff ergodic theorem. Then (\ref{2.5}) holds true with probability one where $\Sig^{i_1,...,i_\nu}$ is the 
 iterated sum constructed by $\xi$ as in (\ref{2.2}).
 \end{corollary}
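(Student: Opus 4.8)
The plan is to argue by induction on the order $\nu$, the case $\nu=1$ being exactly the hypothesis (\ref{2.4}). The engine of the induction is the elementary recursion obtained by summing over the largest index $k_\nu=m$: writing $\Sig^{(\nu)}(n)=\Sig^{i_1,\dots,i_\nu}(n)$ and peeling off the last factor,
\begin{equation}\label{plan-rec}
\Sig^{(\nu)}(n)=\sum_{m=0}^{n-1}\Sig^{(\nu-1)}(m)\,\xi_{i_\nu}(m),
\end{equation}
since for fixed $k_\nu=m$ the inner sum over $0\le k_1<\dots<k_{\nu-1}<m$ is precisely $\Sig^{(\nu-1)}(m)$. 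Each scalar coordinate inherits the hypotheses of the theorem, namely $m^{-1}\sum_{k<m}\xi_{i_\nu}(k)\to Q_{i_\nu}$ from (\ref{2.4}) and $\limsup_m m^{-1}\sum_{k<m}|\xi_{i_\nu}(k)|\le R$ from (\ref{2.3}) together with $|\xi_{i_\nu}|\le|\xi|$, so that the inductive hypothesis applies to $\Sig^{(\nu-1)}$.

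The induction closes through the following weighted Ces\`aro statement, which I would isolate as a lemma: if $p\ge0$ is an integer, $\phi(m)\to L$, and a scalar sequence $\eta$ satisfies $m^{-1}\sum_{k<m}\eta(k)\to Q$ together with $\limsup_m m^{-1}\sum_{k<m}|\eta(k)|<\infty$, then
\begin{equation}\label{plan-lem}
n^{-(p+1)}\sum_{m=0}^{n-1}m^{p}\phi(m)\eta(m)\longrightarrow\frac{L\,Q}{p+1}.
\end{equation}
Granting this, set $\phi(m)=m^{-(\nu-1)}\Sig^{(\nu-1)}(m)$, which converges to $L=\frac1{(\nu-1)!}\prod_{j<\nu}Q_{i_j}$ by the inductive hypothesis, take $\eta=\xi_{i_\nu}$ and $p=\nu-1$; then \eqref{plan-rec} together with $\Sig^{(\nu-1)}(m)=m^{\nu-1}\phi(m)$ and \eqref{plan-lem} yield $n^{-\nu}\Sig^{(\nu)}(n)\to\frac{L\,Q_{i_\nu}}{\nu}=\frac1{\nu!}\prod_{j=1}^{\nu}Q_{i_j}$, which is (\ref{2.5}). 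The factor $\frac1{\nu!}$ thus arises by telescoping the factors $\frac1{p+1}=\frac1\nu$ produced at each level of the induction.

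To prove \eqref{plan-lem} I would write $\phi(m)=L+\ve(m)$ with $\ve(m)\to0$ and treat the two parts separately. For the error part, the crude bound $\sum_{m<n}m^p|\eta(m)|\le n^{p}\sum_{m<n}|\eta(m)|\le C n^{p+1}$ (valid for large $n$ by the $\limsup$ hypothesis on $\eta$) shows that $n^{-(p+1)}\sum_{m<n}m^p|\ve(m)\eta(m)|$ is eventually at most $(\sup_{m\ge M}|\ve(m)|)\,C$ plus a vanishing head, hence tends to $0$. For the main part I would use Abel summation against $T(m)=\sum_{k<m}\eta(k)$: the boundary term satisfies $n^{-(p+1)}(n-1)^pT(n)\to Q$, while $n^{-(p+1)}\sum_{m=1}^{n-1}(m^p-(m-1)^p)T(m)$ converges to $\frac{p}{p+1}Q$, using $T(m)\sim Qm$, the mean value estimate $m^p-(m-1)^p\le p\,m^{p-1}$ for the remainder, and the identity $\sum_{m=1}^{N}m\,(m^{p}-(m-1)^{p})=N^{p+1}-\sum_{m=1}^{N-1}m^{p}\sim\frac{p}{p+1}N^{p+1}$; the difference $Q-\frac{p}{p+1}Q=\frac{Q}{p+1}$ gives \eqref{plan-lem}. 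The main obstacle is precisely this last step: extracting the exact constant $\frac1{p+1}$ from the Abel summation while only the averaged control (\ref{2.3})--(\ref{2.4}) on $\eta$, and not a pointwise bound, is available, which is what forces the combination of a crude $\ell^\infty$ weight bound for the error term with the sharp telescoping identity for the leading term.
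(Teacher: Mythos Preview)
Your argument is correct and matches the paper's approach: Corollary~\ref{cor2.2} follows from Theorem~\ref{thm2.1} once Birkhoff supplies (\ref{2.3}) and (\ref{2.4}) almost surely, and the paper proves Theorem~\ref{thm2.1} by the same induction on $\nu$ via the recursion $\Sig^{i_1,\dots,i_n}(m)=\sum_{k<m}\Sig^{i_1,\dots,i_{n-1}}(k)\,\xi_{i_n}(k)$, the same split into main term plus error $\del_{i^{(n-1)}}$, and Abel summation (\ref{3.1}) for the main term. Your direct bound on the error piece (using $m^{p}\le n^{p}$ together with (\ref{2.3})) slightly streamlines the paper's treatment, which applies Abel summation a second time to that term, but the substance is identical.
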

 
 Next, we formulate the mean ergodic theorem for iterated sums.
 \begin{theorem}\label{thm2.3}
 Let $\xi(k)=(\xi_1(k),...,\xi_d(k)),\, k=0,1,...$ be a stationary $d$-dimensional stochastic process such that
 \begin{equation}\label{2.7}
 E|\xi(0)|^\nu<\infty
 \end{equation}
 and
 \begin{equation}\label{2.8}
 E\xi(0)=Q=(Q_1,...,Q_d).
 \end{equation}
 Then
 \begin{equation}\label{2.9}
 \lim_{n\to\infty}E|n^{-\nu}\Sig^{i_1,...,i_\nu}(n)-\frac 1{\nu !}\prod_{j=1}^\nu Q_{i_j}|=0
 \end{equation}
 for any $1\leq i_1,...,i_\nu\leq d$.
 \end{theorem}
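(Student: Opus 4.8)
The plan is to upgrade the almost sure convergence supplied by Corollary \ref{cor2.2} to the $L^1$ statement (\ref{2.9}) by controlling the integrability of the normalized sums; granting that, Vitali's convergence theorem (indeed, as it will turn out, ordinary dominated convergence) finishes the argument. First I observe that (\ref{2.7}) implies (\ref{2.6}), since by Jensen's inequality $E|\xi(0)|\le(E|\xi(0)|^\nu)^{1/\nu}<\infty$. Thus Corollary \ref{cor2.2} applies and $n^{-\nu}\Sig^{i_1,\dots,i_\nu}(n)\to\frac1{\nu!}\prod_{j=1}^\nu Q_{i_j}$ almost surely, $Q$ being the Birkhoff limit of $n^{-1}\sum_k\xi(k)$ from (\ref{2.4}), which under (\ref{2.8}) equals the deterministic mean $E\xi(0)=Q$ in the ergodic case that (\ref{2.9}) addresses. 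It remains only to produce an integrability bound that converts this almost sure convergence into convergence in mean.

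The crucial estimate is a combinatorial domination that collapses the $d^\nu$ coordinate-dependent sums into a single scalar average. Using $|\xi_{i_j}(k_j)|\le|\xi(k_j)|$ and the fact that, in the expansion of $(\sum_{k}|\xi(k)|)^\nu$ over all ordered $\nu$-tuples, each strictly increasing index tuple is hit by exactly $\nu!$ permutations carrying the same product, one obtains
\begin{equation*}
|\Sig^{i_1,\dots,i_\nu}(n)|\le\sum_{0\le k_1<\dots<k_\nu<n}\prod_{j=1}^\nu|\xi(k_j)|\le\frac1{\nu!}\Big(\sum_{k=0}^{n-1}|\xi(k)|\Big)^\nu.
\end{equation*}
Writing $M_n=n^{-1}\sum_{k=0}^{n-1}|\xi(k)|$, this reads $|n^{-\nu}\Sig^{i_1,\dots,i_\nu}(n)|\le\frac1{\nu!}M_n^\nu$, reducing everything to the one-dimensional ergodic averages $M_n$ of the single scalar $|\xi(0)|\in L^\nu$.

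For $\nu\ge2$ I would now invoke Wiener's dominated ergodic theorem: since $|\xi(0)|\in L^\nu$ with $\nu>1$, the maximal average $M^\ast=\sup_{n\ge1}M_n$ lies in $L^\nu$, so $(M^\ast)^\nu\in L^1$ is a single integrable function dominating every $\frac1{\nu!}M_n^\nu$. Coupling this fixed integrable bound with the almost sure convergence of the first paragraph, the dominated convergence theorem gives (\ref{2.9}) at once. (When $\nu=1$ the claim is just the classical $L^1$ mean ergodic theorem and needs nothing new.) A route uniform in $\nu$ is to use instead the $L^\nu$ mean ergodic theorem, by which $M_n\to E[\,|\xi(0)|\,|\,\cI]$ in $L^\nu$ with $\cI$ the invariant $\sigma$-algebra; then $M_n^\nu\to(E[\,|\xi(0)|\,|\,\cI])^\nu$ in $L^1$, so $\{M_n^\nu\}$ is uniformly integrable, uniform integrability of $\{n^{-\nu}\Sig^{i_1,\dots,i_\nu}(n)\}$ follows by domination, and Vitali's theorem yields (\ref{2.9}).

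I expect the uniform integrability / domination step to be the main obstacle, and its delicate feature is that (\ref{2.7}) furnishes exactly $\nu$ moments and no more. Consequently one cannot extract uniform integrability of $M_n^\nu$ from a higher-moment estimate (the de la Vall\'ee Poussin criterion with $x^{\nu+\delta}$ is unavailable), and a bare bound on $EM_n^\nu$ alone does not suffice. This is precisely why the argument must pass through a genuinely ergodic tool --- the maximal inequality of Wiener for $\nu>1$, or the $L^\nu$-convergence of the ergodic averages --- rather than a crude moment bound; the combinatorial inequality of the second paragraph is what makes these scalar, $\nu=1$-type ergodic results applicable to the $\nu$-fold iterated sum in the first place.
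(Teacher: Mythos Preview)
Your argument is correct and takes a route genuinely different from both proofs the paper gives. The paper offers (i) a direct induction on $\nu$ that parallels the proof of Theorem \ref{thm2.1}, tracking $L^{\nu/n}$-norms at each stage via Abel summation and H\"older, and (ii) a truncation argument that applies Corollary \ref{cor2.2} to the bounded process $\zeta^{(N)}=\xi\,\mathbbm{1}_{|\xi|\le N}$, uses the crude bound $N^\nu$ to pass to $L^1$ by dominated convergence, and then controls $E|\Sig-\Sig_N|$ uniformly in $m$ by H\"older. Your approach shares with (ii) the starting point of Corollary \ref{cor2.2} but replaces truncation by the combinatorial domination $|n^{-\nu}\Sig^{i_1,\dots,i_\nu}(n)|\le\frac{1}{\nu!}M_n^\nu$ and then invokes Wiener's maximal ergodic inequality (or the $L^\nu$ mean ergodic theorem) to obtain a single integrable dominant, respectively uniform integrability. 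This is slicker---no inductive bookkeeping, no truncation layer---at the modest cost of citing the maximal inequality, which the paper avoids; conversely, the paper's inductive proof yields the extra information that $S^{i_1,\dots,i_n}(m)\to\frac{1}{n!}Q_{i^{(n)}}$ in $L^{\nu/n}$ for every $n\le\nu$. All three arguments share the implicit ergodicity assumption you correctly flag (that the Birkhoff limit in (\ref{2.4}) coincide with the constant mean $Q$ of (\ref{2.8})); the paper uses it without comment at (\ref{3.10}).
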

 
 We observe that if we assume in (\ref{2.7}) that $E|\xi(0)|^{\nu p}<\infty$ for some $p\geq 1$ then we can
 obtain the convergence in (\ref{2.8}) in the $L^p$-norm.

\subsection{Iterated integrals}\label{subsec2.2}

Here we start with a continuous in $t$ path $\xi(t),\, t\geq 0$ in $\bbR^d$ such that all integrals $\int_0^t\xi(s)ds$
and $\int_0^t|\xi(s)|ds,\, t\geq 0$ exist. We will consider iterated integrals of the form
\begin{equation}\label{2.10}
\Sig^{(\nu)}(t)=\int_{0\leq s_1\leq...\leq t}\xi(s_1)\otimes\cdots\otimes\xi(s_\nu)ds_1\cdots ds_\nu
\end{equation}
where the left hand side can be viewed as a $d^\nu$-dimensional vector with the coordinates
\begin{equation}\label{2.11}
\Sig^{i_1,...,i_\nu}(t)=\int_{0\leq s_1\leq...\leq t}\xi_{i_1}(s_1)\cdots\xi_{i_\nu}(s_\nu)ds_1\cdots ds_\nu
\end{equation}
where $1\leq i_1,...,i_\nu\leq d$. We will prove the continuous time version of Theorem \ref{thm2.1},
\begin{theorem}\label{thm2.4}
Suppose that
\begin{equation}\label{2.12}
\limsup_{n\to\infty} t^{-1}\int_{0}^t|\xi(s)|ds=R<\infty
\end{equation}
and the limit
\begin{equation}\label{2.13}
\lim_{t\to\infty}t^{-1}\int_{0}^t\xi(s)ds=Q=(Q_1,...,Q_d)
\end{equation}
exists and $|Q|<\infty$. Then for any $1\leq i_1,...,i_\nu\leq d$,
\begin{equation}\label{2.14}
\lim_{t\to\infty}t^{-\nu}\Sig^{i_1,...,i_\nu}(t)=\frac 1{\nu !}\prod_{j=1}^\nu Q_{i_j}.
\end{equation}
\end{theorem}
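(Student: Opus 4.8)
The plan is to argue by induction on $\nu$, exploiting the fact that a continuous-time iterated integral unfolds into an ordinary integral whose integrand is the iterated integral of one lower order. Writing $A_i(t)=\int_0^t\xi_i(s)ds$ so that (\ref{2.13}) reads $t^{-1}A_i(t)\to Q_i$, the base case $\nu=1$ is exactly (\ref{2.13}). For the inductive step I would use the recursion
\[
\Sig^{i_1,\dots,i_\nu}(t)=\int_0^t\xi_{i_\nu}(s)\,\Sig^{i_1,\dots,i_{\nu-1}}(s)\,ds,
\]
obtained by peeling off the integration in the last variable $s_\nu$ over the simplex. Assuming the theorem for order $\nu-1$, I would write $\Sig^{i_1,\dots,i_{\nu-1}}(s)=s^{\nu-1}\big(P_{\nu-1}+\ve(s)\big)$, where $P_{\nu-1}=\frac1{(\nu-1)!}\prod_{j=1}^{\nu-1}Q_{i_j}$ and $\ve(s)\to0$, and substitute this into the recursion, splitting the resulting integral into a main term carrying $P_{\nu-1}$ and a remainder carrying $\ve$.

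The main term requires the limit $t^{-\nu}\int_0^t\xi_{i_\nu}(s)s^{\nu-1}ds\to Q_{i_\nu}/\nu$. Since the integrand $\xi_{i_\nu}(s)s^{\nu-1}$ itself does not converge, I would first integrate by parts,
\[
\int_0^t\xi_{i_\nu}(s)s^{\nu-1}ds=A_{i_\nu}(t)t^{\nu-1}-(\nu-1)\int_0^tA_{i_\nu}(s)s^{\nu-2}ds,
\]
and then apply a weighted Ces\`aro lemma: if $f(s)\to L$ then $t^{-\al}\int_0^tf(s)s^{\al-1}ds\to L/\al$ for $\al>0$, which is immediate from l'H\^opital's rule since the denominator tends to infinity and the ratio of derivatives is $f(t)/\al$. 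Applied to $f(s)=A_{i_\nu}(s)/s\to Q_{i_\nu}$ with $\al=\nu$, this gives $\int_0^tA_{i_\nu}(s)s^{\nu-2}ds\sim Q_{i_\nu}t^\nu/\nu$, while $A_{i_\nu}(t)t^{\nu-1}\sim Q_{i_\nu}t^\nu$. Combining the two yields the factor $Q_{i_\nu}/\nu$, so that the main term converges to $P_{\nu-1}Q_{i_\nu}/\nu=\frac1{\nu!}\prod_{j=1}^\nu Q_{i_j}$, the asserted limit (\ref{2.14}).

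It remains to show that the remainder $t^{-\nu}\int_0^t\xi_{i_\nu}(s)s^{\nu-1}\ve(s)ds$ vanishes, and this is where assumption (\ref{2.12}) enters and where the only real subtlety lies. Because (\ref{2.12}) supplies merely a $\limsup$, not a genuine limit, for $\int_0^t|\xi(s)|ds$, I would not attempt an asymptotic for the weighted absolute integral but instead use the crude uniform bound $\int_0^t|\xi(s)|s^{\nu-1}ds\le t^{\nu-1}\int_0^t|\xi(s)|ds$, so that $\limsup_{t}t^{-\nu}\int_0^t|\xi(s)|s^{\nu-1}ds\le R$. Given $\del>0$, I would choose $T$ with $|\ve(s)|\le\del$ for $s\ge T$: the contribution of $[0,T]$ is $O(t^{-\nu})\to0$ since $\ve$ is bounded there by continuity of $\xi$, while the contribution of $[T,t]$ is at most $\del\,t^{-\nu}\int_0^t|\xi(s)|s^{\nu-1}ds$, whose $\limsup$ is $\le\del R$. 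Letting $\del\to0$ forces the remainder to $0$ and completes the induction. The discrete Theorem \ref{thm2.1} would be handled in exactly the same way, with Abel summation in place of integration by parts and a discrete weighted Ces\`aro lemma replacing its continuous analogue.
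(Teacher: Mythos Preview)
Your proof is correct and follows essentially the same route as the paper: induction on $\nu$, the recursion $\Sig^{i_1,\dots,i_\nu}(t)=\int_0^t\xi_{i_\nu}(s)\Sig^{i_1,\dots,i_{\nu-1}}(s)\,ds$, a split into main term plus remainder, and integration by parts for the main term. Your handling of the remainder is in fact slightly simpler than the paper's: you bound directly via $s^{\nu-1}\le t^{\nu-1}$ and split at $T$, whereas the paper performs a second integration by parts on $I_2^{i^{(n)}}$, introducing the auxiliary function $\eta_{i^{(n)}}(v)=v^{-1}\int_0^v\xi_{i_n}(u)\del_{i^{(n-1)}}(u)\,du$ before invoking (\ref{2.12}); your shortcut avoids this extra step without loss.
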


Again, the main application of the above theorem is to the case when $\xi(t),\, t\geq 0$ is a stationary vector
stochastic process on a probability space $(\Om,\cF,P)$ with (\ref{2.6}) being satisfied. In particular, the
process $\xi$ can be represented by a $P$-preserving dynamical system (flow) $F^t:\Om\to\Om,\, t\geq 0$ and a 
function $g$ on $\Om$ so that $\xi(t)=g\circ F^t$. If (\ref{2.6}) holds true then (\ref{2.12}) and (\ref{2.13}) are satisfied by the Birkhoff ergodic theorem (see, for instance, \cite{Kal}, Ch. 10), and so Theorem \ref{thm2.4} yields (\ref{2.14})
in this case with probability one.

Thus we have
\begin{corollary}\label{cor2.5} Let $\xi(t)=(\xi_1(t),...,\xi_d(t)),\, t\geq 0$ be a continuous in $t$ stationary
$d$-dimensional stochastic process such that (\ref{2.6}) holds true. Let $Q$ be an almost sure random (in general)
 vector limit in (\ref{2.13}) which exists by the Birkhoff ergodic theorem. Then (\ref{2.14}) holds true with
 probability one where $\Sig^{i_1,...,i_\nu}$ is the iterated integral constructed by $\xi$ as in (\ref{2.11}).
 \end{corollary}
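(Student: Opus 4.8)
The plan is to prove Corollary \ref{cor2.5} by invoking Theorem \ref{thm2.4} pathwise, after verifying that its deterministic hypotheses \eqref{2.12} and \eqref{2.13} hold for almost every realization of the stationary process. The key point is that Theorem \ref{thm2.4} is a purely deterministic statement about a single path $\xi(\cdot)$: once \eqref{2.12} and \eqref{2.13} are known to hold for a given $\om\in\Om$, the conclusion \eqref{2.14} follows for that same $\om$ with the (now $\om$-dependent) limit vector $Q=Q(\om)$. So the entire task reduces to producing a single set $\Om_0\subset\Om$ of full $P$-measure on which both hypotheses hold simultaneously.

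First I would fix a scalar coordinate $\xi_i$ and apply the continuous-time Birkhoff ergodic theorem to the flow $F^t$ and the integrable observable $g_i$ (recall $\xi_i(t)=g_i\circ F^t$ and $E|\xi(0)|<\infty$ by \eqref{2.6}). This yields a set $\Om_i$ of full measure on which $t^{-1}\int_0^t\xi_i(s)\,ds$ converges to the conditional expectation $E(\xi_i(0)\,|\,\cI)$, where $\cI$ is the invariant $\sigma$-algebra of the flow; call this limit $Q_i(\om)$. Intersecting over $i=1,\dots,d$ gives the almost sure existence of the vector limit in \eqref{2.13} with $|Q|<\infty$ a.s. Second, applying the same ergodic theorem to the nonnegative observable $|g|=|\xi(0)|$ gives a full-measure set on which $t^{-1}\int_0^t|\xi(s)|\,ds$ converges to the finite limit $E(|\xi(0)|\,|\,\cI)$; in particular the $\limsup$ in \eqref{2.12} is finite (indeed it is a genuine limit $R=R(\om)<\infty$) on that set.

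Let $\Om_0$ be the intersection of these finitely many full-measure sets, so $P(\Om_0)=1$. For each $\om\in\Om_0$ the deterministic path $s\mapsto\xi(s,\om)$ is continuous and satisfies both \eqref{2.12} and \eqref{2.13}, so Theorem \ref{thm2.4} applies verbatim and delivers \eqref{2.14} with $Q=Q(\om)$. Since this holds for every $\om\in\Om_0$, it holds $P$-almost surely, which is exactly the assertion of the corollary. The only mild subtlety to check is measurability of the random limit $Q$ and of the iterated integrals $\Sig^{i_1,\dots,i_\nu}(t)$, but continuity of the path in $t$ makes the integrals \eqref{2.11} well-defined measurable random variables, and $Q$ is measurable as an a.s.\ limit of measurable functions.

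I do not expect a genuine obstacle here, since all the analytic work has been front-loaded into Theorem \ref{thm2.4}; the corollary is essentially a bookkeeping argument combining that deterministic result with Birkhoff. If anything, the one place that warrants care is ensuring the continuous-time Birkhoff theorem is applied in the correct form for a measurable flow — one must either assume enough regularity (e.g.\ measurability of $(t,\om)\mapsto F^t\om$) so that the time averages are measurable, or cite the standard continuous-time ergodic theorem as in \cite{Kal}, Ch.~10, which already incorporates these hypotheses. Given the stationarity and continuity assumptions of the corollary, this is automatic.
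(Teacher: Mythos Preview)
Your proposal is correct and matches the paper's own argument essentially verbatim: the paper deduces Corollary \ref{cor2.5} directly from Theorem \ref{thm2.4} by noting that under \eqref{2.6} the continuous-time Birkhoff ergodic theorem (cited as \cite{Kal}, Ch.~10) gives \eqref{2.12} and \eqref{2.13} almost surely, after which Theorem \ref{thm2.4} applies pathwise. Your write-up simply spells out the full-measure intersection and measurability points more explicitly than the paper does.
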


The following convergence in mean assertion holds true in the continuous time, as well.
\begin{theorem}\label{thm2.6} 
Let $\xi(t)=(\xi_1(t),...,\xi_d(t)),\, t\geq 0$ be a continuous in $t$ stationary $d$-dimensional stochastic process
 such that (\ref{2.7}) and (\ref{2.8}) hold true.  Then
 \begin{equation}\label{2.15}
 \lim_{t\to\infty}E|t^{-\nu}\Sig^{i_1,...,i_\nu}(t)-\frac 1{\nu!}\prod^\nu_{j=1}Q_{i_j}|=0
 \end{equation}
 for any $1\leq i_1,...,i_\nu\leq d$.
 \end{theorem}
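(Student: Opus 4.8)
The plan is to upgrade the almost sure convergence already available from Corollary~\ref{cor2.5} to convergence in mean by establishing uniform integrability of the normalized iterated integrals, and then to conclude by Vitali's convergence theorem (equivalently, the generalized dominated convergence theorem). Note first that (\ref{2.7}) with $\nu\geq 1$ implies (\ref{2.6}), so Corollary~\ref{cor2.5} applies and gives that $t^{-\nu}\Sig^{i_1,...,i_\nu}(t)$ converges $P$-almost surely, as $t\to\infty$, to the right-hand side of (\ref{2.15}). Hence only the uniform control of the tails of $\{t^{-\nu}\Sig^{i_1,...,i_\nu}(t)\}_{t\geq 1}$ remains to be supplied.

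First I would produce a single dominating family. Using $|\xi_i(s)|\leq|\xi(s)|$ for each coordinate and then symmetrizing the ordered simplex, for fixed indices one obtains
\[
\big|t^{-\nu}\Sig^{i_1,...,i_\nu}(t)\big|\leq t^{-\nu}\int_{0\leq s_1\leq\cdots\leq s_\nu\leq t}|\xi(s_1)|\cdots|\xi(s_\nu)|\,ds_1\cdots ds_\nu=\frac 1{\nu!}A_t^\nu,
\]
where $A_t=t^{-1}\int_0^t|\xi(s)|\,ds$, since once the indices are replaced by the norm the integrand becomes symmetric, the ordered simplex carries exactly $1/\nu!$ of the integral over the cube $[0,t]^\nu$, and that cube integral factorizes.

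The key step, and the main obstacle, is to show that the dominating family $\{A_t^\nu\}_{t\geq 1}$ is uniformly integrable: here only the $\nu$-th moment (\ref{2.7}) is at hand, rather than a moment of order $\nu+\ve$, so a crude bound giving $\sup_t EA_t^\nu<\infty$ yields tightness but not uniform integrability. I would get around this by invoking the \emph{exact} $L^\nu$ mean ergodic theorem instead of a moment estimate: since $|\xi|\in L^\nu$ by (\ref{2.7}), the continuous-time mean ergodic theorem gives $A_t\to\bar A$ in $L^\nu$, where $\bar A=E\big[|\xi(0)|\,\big|\,\cI\big]$ and $\cI$ is the $\sigma$-algebra of invariant sets; in particular $EA_t^\nu=\|A_t\|_\nu^\nu\to\|\bar A\|_\nu^\nu=E\bar A^\nu<\infty$. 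Combining this convergence of $L^1$-norms with the almost sure convergence $A_t^\nu\to\bar A^\nu$ coming from Birkhoff's theorem and the nonnegativity of $A_t^\nu$, Scheff\'e's lemma yields $A_t^\nu\to\bar A^\nu$ in $L^1$; a family that converges in $L^1$ is uniformly integrable.

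Finally, since $|t^{-\nu}\Sig^{i_1,...,i_\nu}(t)|\leq\frac 1{\nu!}A_t^\nu$ and the dominating family $\{\frac1{\nu!}A_t^\nu\}$ is uniformly integrable, the family $\{t^{-\nu}\Sig^{i_1,...,i_\nu}(t)\}$ is itself uniformly integrable; together with the almost sure convergence from Corollary~\ref{cor2.5} this gives (\ref{2.15}) by Vitali's theorem. The same device of passing through the $L^\nu$ mean ergodic theorem and Scheff\'e's lemma, rather than attempting an $L^{1+\ve}$ bound on $A_t^\nu$, is what makes the argument go through under the borderline moment assumption, and it is also the step that, when $E|\xi(0)|^{\nu p}<\infty$ is assumed, upgrades the conclusion to convergence in the $L^p$-norm, as noted after Theorem~\ref{thm2.3}.
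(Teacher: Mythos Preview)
Your proof is correct and takes a genuinely different route from the paper's. The paper (Section~4.2) proceeds by truncation: it sets $\zeta^{(N)}(t)=\xi(t)\mathbbm{1}_{|\xi(t)|\leq N}$, applies Corollary~\ref{cor2.5} to the bounded process $\zeta^{(N)}$, upgrades the resulting almost sure convergence to $L^1$ by dominated convergence (using the crude bound $|t^{-\nu}\Sig_N^{i_1,\dots,i_\nu}(t)|\leq N^\nu$), and then controls $E|\Sig^{i_1,\dots,i_\nu}(t)-\Sig_N^{i_1,\dots,i_\nu}(t)|$ via a telescoping sum and H\"older's inequality, obtaining a bound of order $t^\nu\nu\|\xi\|_\nu^{\nu-1}\rho_\nu(N)$ with $\rho_\nu(N)\downarrow 0$. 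You instead dominate $|t^{-\nu}\Sig^{i_1,\dots,i_\nu}(t)|$ by $\frac{1}{\nu!}A_t^\nu$ and establish uniform integrability of $\{A_t^\nu\}$ in one stroke by combining the $L^\nu$ mean ergodic theorem (giving $EA_t^\nu\to E\bar A^\nu$) with Birkhoff's theorem and Scheff\'e's lemma; Vitali then closes the argument. Your approach is shorter and more conceptual, avoiding the truncation bookkeeping; the paper's approach is more elementary in that it uses only dominated convergence and H\"older, without invoking Scheff\'e or Vitali, and the truncation structure makes it transparent how each piece of the error is controlled. Both rely on the same underlying input, namely the $L^\nu$ mean ergodic theorem (explicitly in yours, implicitly through $\rho_\nu(N)\to 0$ in the paper's).
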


\section{Discrete time case proofs}\label{sec3}\setcounter{equation}{0}

\subsection{Abel's summation by parts}\label{subsec3.1}

In what follows we will use several times the classical Abel's summation by parts formula
(which is used also, for instance, in \cite{Or}),
\begin{equation}\label{3.1}
\sum_{r=0}^na_rb_r=\sum_{r=0}^{n-1}(r+1)(a_r-a_{r+1})\sig_r+(n+1)a_n\sig_n,
\end{equation}
where $\sig_r=(r+1)^{-1}\sum_{q=0}^rb_q$, which is valid for any sequences $a_0,a_1,...$ and $b_0,b_1,...$.

For reader's convenience we will recall its simple proof which goes on by induction. For $n=1$ we have 
\[
a_0b_0+a_1b_1=(a_0-a_1)b_0+2a_1\frac {(b_0+b_1)}2=(a_0-a_1)\sig_0+2a_1\sig_1.
\]
Suppose that (\ref{3.1}) holds true for $n=1,2,...,m$ and prove it for $n=m+1$. Using (\ref{3.1}) for
$n=m$ we have
\begin{eqnarray*}
&\sum_{r=0}^{m+1}a_rb_r=\sum_{r=0}^ma_rb_r+a_{m+1}b_{m+1}=\sum_{r=0}^{m-1}(r+1)(a_r-a_{r+1})\sig_r\\
&+(m+1)a_m\sig_m+a_{m+1}b_{m+1}=\sum_{r=0}^m(r+1)(a_r-a_{r+1})\sig_r+(m+2)a_{m+1}\sig_{m+1},
\end{eqnarray*}
as required.

\subsection{Proof of Theorem \ref{thm2.1}}\label{subsec3.2}

We proceed by induction. For $\nu=1$ the result follows from our assumption (\ref{2.4}). Suppose that
the assertion of Theorem \ref{thm2.1} holds true for $\nu=1,...,n-1,\, n\geq 2$ and we will prove its 
validity for $\nu=n$. We have
\[
\Sig^{i_1,...,i_n}(m)=\sum_{0\leq k<m}\xi_{i_n}(k)\Sig^{i_1,...,i_{n-1}}(k)
\]
and
\[
S^{i^{(n)}}(m)=S^{i_1,...,i_n}(m)=m^{-n}\sum_{0\leq k<m}\xi_{i_n}(k)k^{n-1}S^{i_1,...,i_{n-1}}(k)
\]
where $i^{(n)}=(i_1,...,i_n)$. We set also
\[
Q_{i^{(l)}}=Q_{i_1,...,i_l}=\prod_{j=1}^lQ_{i_j}\quad\mbox{and}\quad \del_{i^{(l)}}(k)=S^{i^{(l)}}(k)
-\frac 1{l!}Q_{i^{(l)}}.
\]

By the induction hypothesis 
\begin{equation}\label{3.2}
\lim_{m\to\infty}S^{i^{(n-1)}}(m)=\lim_{m\to\infty}S^{i_1,...,i_{n-1}}(m)=\frac {Q_{i^{(n-1)}}}{(n-1)!}.
\end{equation}
Next, we write
\[
S^{i^{(n)}}(m)=m^{-n}\sum_{0\leq k<m}\xi_{i_n}(k)k^{n-1}(\del_{i^{(n-1)}}(k)+\frac {Q_{i^{(n-1)}}}{(n-1)!})
=I_1^{i^{(n)}}(m)+I_2^{i^{(n)}}(m)
\]
where
\[
I_1^{i^{(n)}}(m)=\frac {Q_{i^{(n-1)}}}{(n-1)!}m^{-n}\sum_{0\leq k<m}k^{n-1}\xi_{i_n}
\]
and
\[
I_2^{i^{(n)}}(m)=m^{-n}\sum_{0\leq k<m}k^{n-1}\xi_{i_n}(k)\del_{i^{(n-1)}}(k).
\]

We start with $I_1^{i^{(n)}}$ and invoke (\ref{3.1}) to obtain that for $n>1$,
\begin{eqnarray*}
&I_1^{i^{(n)}}(m)=\frac {Q_{i^{(n-1)}}}{(n-1)!}m^{-n}(\sum_{r=0}^{m-2}(r+1)(r^{n-1}-(r+1)^{n-1})\sig_r^{i_n}\\
&+m(m-1)^{n-1}\sig_{m-1}^{i_n})=\frac {Q_{i^{(n-1)}}}{(n-1)!}m^{-n}\big(Q_{i_n}\sum_{r=0}^{m-2}(r+1)(r^{n-1}-(r+1)^{n-1})\\
&+\sum_{r=0}^{m-2}(r+1)(r^{n-1}-(r+1)^{n-1})\del_{i_n}(r)+Q_{i_n}m(m-1)^{n-1}+m(m-1)^{n-1}\del_{i_n}(m-1)\big)
\end{eqnarray*}
where $\sig_r^i=(r+1)^{-1}\sum_{q=0}^r\xi_i(q)=S^i(r)$ and $\del_i(r)=\sig^i_r-Q_i=S^i(r)-Q_i$.

Since $(r+1)^{n-1}=r^{n-1}+(n-1)r^{n-2}+...+1$, it follows that
\begin{equation}\label{3.3}
\lim_{n\to\infty}m^{-n}\sum_{r=0}^{m-2}(r+1)(r^{n-1}-(r+1)^{n-1})=-\frac {(n-1)}n.
\end{equation}
By the assumption (\ref{2.4}),
\begin{equation}\label{3.4}
\lim_{m\to\infty}\del_i(m)=0\quad\mbox{for any}\quad i=1,...,d,
\end{equation}
and so in order to derive that
\begin{equation}\label{3.5}
\lim_{m\to\infty}I_1^{i^{(n)}}(m)=\frac {Q_{i^{(n)}}}{n!},
\end{equation}
it suffices to show that
\begin{equation}\label{3.6}
\lim_{m\to\infty}m^{-n}\sum_{r=0}^{m-2}(r+1)(r^{n-1}-(r+1)^{n-1})\del_{i_\nu}(r)=0.
\end{equation}

Since
\[
-2n\leq m^{-(n-1)}(r+1)(r^{n-1}-(r+1)^{n-1})\leq 0
\]
for any $r\leq m-2$, in order to obtain (\ref{3.5}) it suffices to show that
\begin{equation}\label{3.7}
\lim_{m\to\infty}m^{-1}\sum_{r=0}^m|\del_i(r)|=0\quad\mbox{for}\quad i=1,...,d.
\end{equation}
To derive (\ref{3.7}) observe that by (\ref{3.4}) we can define for each $\ve>0$ and $j=1,...,d$,
\[
K_{j,\ve}=\min\{ l:\, |\del_j(m)|<\ve\,\,\,\mbox{for all}\,\,\, m>l\}<\infty.
\]
Then
\[
m^{-1}\sum_{r=0}^m|\del_j(r)|\leq\ve+m^{-1}\sum_{r=0}^{K_{j,\ve}}|\del_j(r)|.
\]
Hence, 
\[
\limsup_{m\to\infty}m^{-1}\sum_{r=0}^m|\del_j(r)|\leq\ve . 
\]
Since $\ve>0$ is arbitrary, we obtain that this upper limit is equal to zero implying (\ref{3.7}) which yields 
(\ref{3.6}), and so (\ref{3.5}) follows.

In order to complete the proof of Theorem \ref{thm2.1} it remains to show that
\begin{equation}\label{3.8}
\lim_{m\to\infty}I_2^{i^{(n)}}(m)=0.
\end{equation}
By (\ref{3.1}),
\begin{equation}\label{3.9}
I_2^{i^{(n)}}(m)=m^{-n}\sum_{r=0}^{m-2}(r+1)(r^{n-1}-(r+1)^{n-1})\eta_{i^{(n)}}(r)+\frac {m-1}{m^{n-1}}\eta_{i^{(n)}}(m-1)
\end{equation}
where
\[
\eta_{i^{(n)}}(l)=\frac 1{l+1}\sum_{r=0}^l\xi_{i_n}(r)\del_{i^{(n-1)}}(r).
\]
By the induction hypothesis
\[
\lim_{k\to\infty}\del_{i^{(n-1)}}(k)=0,
\]
and so, similarly to the above, we can define
\[
K_{i^{(n-1)},\ve}=\min\{ l:\,|\del_{i^{(n-1)}}(k)|<\ve\quad\mbox{for all}\quad k>l\}<\infty.
\]
Then
\[
|\eta_{i^{(n)}}(m)|\leq\frac \ve{m+1}\sum_{r=0}^m|\xi_{i_n}(r)|+\frac 1{m+1}\sum_{r=0}^{K_{i^{(n-1)},\ve}}|\xi_{i_n}(r)|
|\del_{i^{(n-1)}}(r)|,
\]
and so by the assumption (\ref{2.3}),
\[
\limsup_{m\to\infty}|\eta_{i^{(n)}}(m)|\leq R\ve.
\]
Hence, by (\ref{3.9}) for $n\geq 2$,
\[
\limsup_{m\to\infty}|I_2^{i^{(n)}}(m)|\leq R\ve
\]
and since $\ve>0$ is arbitrary, we obtain (\ref{3.8}) and complete the proof of Theorem \ref{thm2.1}. \qed

\subsection{Proof of Theorem \ref{thm2.3}}\label{subsec3.3}

We will give two proofs of Theorem \ref{thm2.3}: one direct and one based on Corollary \ref{cor2.2} and 
the approximation of the process $\xi$ by bounded processes. In the direct proof we will rely first on the $L^p$
ergodic theorem (see, for instance, \cite{Wal},\,\S 1.6) which yields from (\ref{2.7}) and (\ref{2.8}) that
\begin{equation}\label{3.10}
\lim_{m\to\infty}\| m^{-1}\sum_{k=0}^m\xi(k)-Q\|_\nu=0
\end{equation}
where $\|\cdot\|_p$ denotes the $L^p$-norm. We will prove by induction that for each $n=1,2,...,\nu$ and $i^{(n)}=(i_1,...,i_n)$,
\begin{equation}\label{3.11}
\lim_{m\to\infty}\|\del_{i^{(n)}}(m)\|_{\nu/n}=\lim_{m\to\infty}\| S^{i_1,...,i_n}(m)-\frac {Q_{i^{(n)}}}{n!}\|_{\nu/n}=0.
\end{equation}

Indeed, for $n=1$ we have (\ref{3.11}) by (\ref{3.10}). Suppose that (\ref{3.11}) holds true for $n=1,...,k-1<\nu$
and prove it for $n=k$. As in the previous subsection we will write
\[
S^{i^{(k)}}(m)=I_1^{i^{(k)}}(m)+I_2^{i^{(k)}}(m)
\]
with $I_1^{i^{(k)}}$ and $I_2^{i^{(k)}}$ defined there but now $S^{i^{(k)}}(m)$, $I_1^{i^{(k)}}(m)$ and
$I_2^{i^{(k)}}(m)$ are random variables. The induction step and the whole proof of Theorem \ref{thm2.3} will
be completed if we show that
\begin{equation}\label{3.12}
\lim_{m\to\infty}\| I_1^{i^{(k)}}(m)-\frac {Q_{i^{(k)}}}{k!}\|_{\nu/k}=0
\end{equation}
and
\begin{equation}\label{3.13}
\lim_{m\to\infty}\| I_2^{i^{(k)}}(m)\|_{\nu/k}=0
\end{equation}
taking into account that (\ref{3.12}) and (\ref{3.13}) follow from (\ref{3.10}) for $k=1$.

We saw in Section \ref{subsec3.2} that
\begin{eqnarray*}
& I_1^{i^{(k)}}(m)=\frac {Q_{i^{(k)}}}{(k-1)!}(m^{-k}\sum_{r=0}^{m-2}(r+1)(r^{k-1}-(r+1)^{k-1})\\
&+\frac {(m-1)^{k-1}}{m^{k-1}})+\frac{Q_{i^{(k-1)}}}{(k-1)!}(m^{-k}\sum_{r=0}^{m-2}(r+1)(r^{k-1}\\
&-(r+1)^{k-1})\del_{i_k}(r)+\frac {(m-1)^{k-1}}{m^{k-1}}\del_{i_k}(m-1)).
\end{eqnarray*}
Letting here $m\to\infty$ and taking into account (\ref{3.10}) we derive easily (\ref{3.12}).

Considering the representation (\ref{3.8}) of $I_2^{i^{(k)}}$ we conclude that
\begin{equation}\label{3.14}
\| I_2^{i^{(k)}}(m)\|_{\nu/k}\leq m^{-k}(k+1)\sum^{m-2}_{r=0}(r+1)^{k-2}\|\eta_{i^{(k)}}(r)\|_{\nu/k}+
\frac {m-1}{m^{k-1}}\|\eta_{i^{(k)}}(m-1)\|_{\nu/k}.
\end{equation}
and by the H\" older inequality
\[
\|\eta_{i^{(k)}}(l)\|_{\nu/k}\leq\frac 1{l+1}\sum_{r=0}^l\|\xi_{i_k}(r)\|_\nu\|\del_{i^{(k-1)}}(r)\|_{\frac \nu{k-1}}.
\]
By the induction hypothesis, (\ref{3.11}) holds true when $n=k-1$, and so
\[
\|\del_{i^{(k-1)}}(l)\|_{\frac \nu{k-1}}\to 0\,\,\mbox{as}\,\, l\to\infty.
\]
This together with (\ref{2.7}) yields that
\begin{equation}\label{3.15}
\lim_{l\to\infty}\|\eta_{i^{(k)}}(l)\|_{\nu/k}=0.
\end{equation}
Now we obtain (\ref{3.13}) from (\ref{3.14}) and (\ref{3.15}) similarly to the final part of the proof of
Theorem \ref{thm2.1}, completing the proof of Theorem \ref{thm2.3}.

Another proof of Theorem \ref{thm2.3} proceeds, essentially similar to the standard approximation argument in the $L^p$
ergodic theorem (see \cite{Wal}). Namely, set
\[
\zeta^{(N)}(k)=\xi(k)\mathbbm{1}_{|\xi(k)|\leq N}\quad\mbox{for}\,\,\, N=1,2,...\,\mbox{and}\,\,\, k=0,1,...
\]
where $\mathbbm{1}_\Gam$ is the indicator of a set $\Gam$. Then, 
\begin{equation}\label{3.16}
\|\zeta^{(N)}(k)-\xi(k)\|_{\nu}=\|\xi(k)\mathbbm{1}_{|\xi(k)|>N}\|_{\nu}=\rho_\nu(N)\,\downarrow 0\,\,\,\mbox{as}\,\,\,
N\uparrow\infty
\end{equation}
and $\rho_\nu(N)$ does not depend on $k$. Now let $\Sig_{N}^{i_1,...,i_\nu}$ be the iterated sum constructed as 
$\Sig^{i_1,...,i_\nu}$ but with $\zeta_{i_1}^{(N)},...,\zeta_{i_\nu}^{(N)}$ in place of $\xi_{i_1},...,\xi_{i_\nu}$.
Then by Corollary \ref{cor2.2} with probability one,
\begin{equation}\label{3.17}
lim_{m\to\infty} m^{-\nu}\Sig_N^{i_1,...,i_\nu}(m)=\frac 1{\nu !}\prod_{j=1}^\nu Q_{i_j}^{(N)}
\end{equation}
where almost surely,
\[
Q_i^{(N)}=\lim_{m\to\infty} m^{-1}\sum_{k=0}^m\zeta_i^{(N)}(k)
\]
taking into account that the last limit exists with probability one by the Birkhoff ergodic theorem.
Observe that
\[
|m^{-\nu}\Sig_N^{i_1,...,i_\nu}(m)|\leq N^\nu,
\]
and so (\ref{3.17}) together with the dominated convergence theorem yields
\begin{equation}\label{3.18}
\lim_{m\to\infty}E|m^{-\nu}\Sig_N^{i_1,...,i_\nu}(m)-\frac 1{\nu !}\prod_{j=1}^\nu Q_{i_j}^{(N)}|=0.
\end{equation}

Next, observe that by the H\" older inequality,
\begin{eqnarray*}
&E|\xi_{i_1}(k_1)\xi_{i_2}(k_2)\cdots\xi_{i_\nu}(k_\nu)-\zeta_{i_1}^{(N)}(k_1)\zeta_{i_2}^{(N)}(k_2)\cdots
\zeta_{i_\nu}^{(N)}(k_\nu)|\\
&\leq\sum_{j=1}^\nu E|\xi_{i_1}(k_1)\cdots\xi_{i_{j-1}}(k_{j-1})(\xi_{i_j}(k_j)
-\zeta_{i_j}^{(N)}(k_j))\zeta_{i_{j+1}}^{(N)}(k_{i_{j+1}})\cdots\zeta_{i_\nu}^{(N)}(k_\nu)|\\
&\leq\|\xi_{i_1}\|_\nu\|\xi_{i_2}\|_{\nu}\cdots\|\xi_{i_{j-1}}\|_{\nu}\rho_\nu(N)\|\zeta^{(N)}_{i_{j+1}}\|_{\nu}
\cdots\|\zeta_{i_\nu}^{(N)}\|_{\nu}\leq\nu\|\xi\|_{\nu}^{\nu-1}\rho_\nu(N).
\end{eqnarray*}
Combining this with (\ref{3.16})--(\ref{3.17}) we derive easily (\ref{2.9}). \qed

\section{Continuous time case}\label{sec4}
\subsection{Proof of Theorem \ref{thm2.4}}\label{subsec4.1}
It is possible to derive Theorem \ref{thm2.4} from Theorem \ref{thm2.1} comparing iterated sums of
$\zeta(n)=\int_{n-1}^n\xi(s)ds$ with iterated integrals of the process $\xi$ itself but when $\nu>1$ this comparison is not so easy as in the standard situation $\nu=1$. We prefer to provide a direct proof seeing, in particular, that the Abel
summation formula we used in the discrete time case corresponds to the integration by parts when dealing with iterated
 integrals.

For $n>1$ we have now,
\[
\Sig^{i_1,...,i_n}(t)=\int_0^t\xi_{i_n}(s)\Sig^{i_1,...,i_{n-1}}(s)ds
\]
and
\begin{equation}\label{3.19}
S^{i^{(n)}}(t)=S^{i_1,...,i_n}(t)=t^{-n}\int_0^t\xi_{i_n}(s)s^{n-1}S^{i_1,...,i_{n-1}}(s)ds
\end{equation}
where both $\Sig$ and $S$ are considered to be 0 at 0.

We proceed by induction. For $\nu=1$ Theorem \ref{thm2.4} follows from the assumption. Suppose that (\ref{2.14}) holds true
for all $\nu\leq n-1$ and prove it for $\nu=n$. By the induction hypothesis
\begin{equation}\label{3.20}
\lim_{t\to\infty}S^{i^{(n-1)}}(t)=\lim_{t\to\infty}S^{i_1,...,i_{n-1}}(t)=\frac {Q_{i^{(n-1)}}}{(n-1)!}.
\end{equation}
By (\ref{3.19}) we can write
\[
S^{i^{(n)}}(t)=t^{-n}\int_0^t\xi_{i_n}(s)s^{n-1}(\del_{i^{(n-1)}}(s)+\frac {Q_{i^{(n-1)}}}{(n-1)!})ds=
I_1^{i^{(n)}}(t)+I_2^{i^{(n)}}(t)
\]
where, as before, 
\begin{eqnarray*}
&\del_{i^{(l)}}(s)=S^{i^{(l)}}(s)-\frac {Q_{i^{(l)}}}{l!},\,\,
I_1^{i^{(n)}}(t)=\frac {Q_{i^{(n-1)}}}{(n-1)!}t^{-n}\int_0^ts^{n-1}\xi_{i_n}(s)ds,\\
&I_2^{i^{(n)}}(t)=t^{-n}\int_0^ts^{n-1}\xi_{i_n}(s)\del_{i^{(n-1)}}(s)ds\,\,\mbox{and}\,\,\del_j(s)=\frac 1s\int_0^s
\xi_j(u)du-Q_j.
\end{eqnarray*}

Next, we will integrate by parts the integral in $I_1^{i^{(n)}}$ to obtain
\begin{eqnarray*}
&I_1^{i{(n)}}(t)=\frac {Q_{i^{(n-1)}}}{(n-1)!}t^{-n}\big(-(n-1)\int_0^ts^{n-2}ds\int_0^s\xi_{i_n}(u)du+t^{n-1}\int_0^t
\xi_{i_n}(s)ds\big)\\
&=\frac {Q_{i^{(n-1)}}}{(n-1)!}t^{-n}\big(-Q_{i_n}t^n\frac {(n-1)}n+(n-1)\int_0^ts^{n-1}\del_{i_n}(s)ds+t^nQ_{i_n}+
t^n\del_{i_n}(t)\big)\\
&=\frac {Q_{i^{(n)}}}{n!}-\frac {Q_{i^{(n-1)}}}{(n-2)!}t^{-n}\int_0^ts^{n-1}\del_{i_n}(s)ds+
\frac {Q_{i^{(n-1)}}}{(n-1)!}\del_{i_n}(t).
\end{eqnarray*}
The last term here tends to zero as $t\to\infty$ since $\del_{i_n}(t)\to 0$ by the assumption (\ref{2.13}) of 
Theorem \ref{thm2.4}. We have also
\[
t^{-n}|\int_0^ts^{n-1}\del_{i_n}(s)ds|\leq\frac 1t\int_0^t|\del_{i_n(s)}|ds\to 0\,\,\,\mbox{as}\,\,\, t\to\infty
\]
where we argue in the same way as in (\ref{3.7}). Hence,
\begin{equation}\label{3.21}
\lim_{t\to\infty}I_1^{i^{(n)}}(t)=\frac {Q_{i^{(n)}}}{n!}.
\end{equation}

In view of (\ref{3.21}), in order to complete the induction step and the whole proof of Theorem \ref{thm2.4} it remains
to show that
\begin{equation}\label{3.22}
\lim_{t\to\infty}I_2^{i^{(n)}}(t)=0.
\end{equation}
We invoke the integration by parts again to obtain
\begin{equation}\label{3.23}
I_2^{i^{(n)}}(t)=t^{-n}\big(-(n-1)\int_0^ts^{n-1}\eta_{i^{(n)}}(s)ds+t^{n}\eta_{i^{(n)}}(t)\big)
\end{equation}
where $\eta_{i^{(n)}}(v)=v^{-1}\int_0^v\xi_{i_n}(u)\del_{i^{(n-1)}}(u)du$. 

The final steps of the proofs of Theorems \ref{thm2.1} and \ref{thm2.4} are similar. Namely, by (\ref{3.20}) we have here
\[
\lim_{t\to\infty}\del_{i^{(n-1)}}(t)=0,
\]
and so we can define for each $\ve>0$ also in the continuous time case
\[
K_{i^{(n-1)},\ve}=\inf\{ t:\, |\del_{i^{(n-1)}}(s)|<\ve\,\,\mbox{for all}\,\, s>t\}<\infty.
\]
Then
\[
|\eta_{i^{(n)}}(t)|\leq t^{-1}\ve\int_0^t|\xi_{i_n}(u)|du+t^{-1}\int_0^{K_{i^{(n-1)},\ve}+1}|\xi_{i_n}(u)||\del_{i^{n-1}}(u)|du,
\]
and so by the assumption (\ref{2.12}),
\[
\limsup_{t\to\infty}|\eta_{i^{(n)}}(t)|\leq R\ve.
\]
Hence, by (\ref{3.23}) for $n\geq 2$,
\[
\limsup_{t\to\infty}|I_2^{(i^{(n)}}(t)|\leq R\ve
\]
and since $\ve>0$ is arbitrary, we obtain (\ref{2.14}) completing the proof of Theorem \ref{thm2.4}.  \qed

\subsection{Proof of Theorem \ref{thm2.6}}\label{subsec4.2}

The proof of Theorem \ref{thm2.6} goes on in the same way as the proof of Theorem \ref{thm2.3}. Especially easy
to adapt to the continuous time case the second proof of Theorem \ref{thm2.3} appearing in Section 3.3.

Set again $\zeta^{(N)}(t)=\xi(t)\mathbbm{1}_{|\xi(t)|\leq N}\quad\mbox{for}\,\,\,N=1,2,...$ and $t\geq 0$.
Then for any $n=0,1,...,\nu$ and $t\geq 0$,
\begin{equation}\label{3.24}
\|\zeta^{(N)}(t)-\xi(t)\|_{\nu}=\|\xi(t)\mathbbm{1}_{|\xi(t)|\geq N}\|_{\nu}=\|\xi(0)\mathbbm{1}_{|\xi(t)|\geq N}\|_{\nu}
=\rho_\nu(N)\,\downarrow 0\,\,\,\mbox{as}\,\,\,N\uparrow\infty
\end{equation}
where $\rho_\nu(N)$ does not depend on $t$. Now let $\Sig_N^{i_1,...,i_\nu}$ be the iterated integral given by (\ref{2.11})
but with $\zeta_i^{(N)}$'s in place of $\xi_i$'s. Then by Corollary \ref{cor2.5} with probability one,
\begin{equation}\label{3.25}
\lim_{t\to\infty}t^{-\nu}\Sig_N^{i_1,...,i_\nu}(t)=\frac 1{\nu!}\prod_{j=1}^\nu Q^{(N)}_{i_j}
\end{equation}
where almost surely
\[
Q_i^{(N)}=\lim_{t\to\infty}t^{-1}\int_0^t\zeta_i^{(N)}(s)ds
\]
and the latter holds true by the continuous time Birkhoff ergodic theorem (see, for instance, \cite{Kal}).

Observe that
\[
|t^{-\nu}\Sig_N^{i_1,...,i_\nu}(t)|\leq N^\nu,
\]
and so (\ref{3.25}) together with the dominated convergence theorem yields
\begin{equation}\label{3.26}
\lim_{t\to\infty}E|t^{-\nu}\Sig_N^{i_1,...,i_\nu}(t)-\frac 1{\nu!}\prod_{j=1}^\nu Q^{(N)}_{i_j}|=0.
\end{equation}
Using the H\" older inequality we obtain now
\begin{eqnarray*}
&E|\Sig^{i_1,...,i_\nu}(t)-\Sig_N^{i_1,...,i_\nu}(t)|\leq\int_{0\leq s_1\leq...\leq s_\nu\leq t}
E|\xi_{i_1}(s_1)\cdots\xi_{i_\nu}(s_\nu)\\
&-\zeta^{(N)}_{i_1}(s_1)\cdots\zeta^{(N)}_{i_\nu}(s_\nu)|ds_1\cdots ds_\nu\\
&\leq\int_{0\leq s_1\leq...\leq s_\nu\leq t}\sum_{j=1}^\nu E|\xi_{i_1}(s_1)\cdots\xi_{i_{j-1}}(s_{j-1})
(\xi_{i_j}(s_j)\\
&-\zeta^{(N)}_{i_j}(s_j))\zeta^{(N)}_{i_{j+1}}(s_{j+1})\cdots\zeta^{(N)}_{i_\nu}(s_\nu)|ds_1\cdots ds_\nu\\
&\leq t^\nu\sum_{j=1}^\nu\|\xi_{i_1}\|_\nu\|\xi_{i_2}\|_{\nu}\cdots\|\xi_{i_{j-1}}\|_{\nu}\rho_\nu(N)\|\zeta^{(N)}_{i_{j+1}}
\|_{\nu}\cdots\|\zeta_{j_\nu}^{(N)}\|_{\nu}\\
&\leq t^\nu\nu\|\xi\|_{\nu}^{\nu-1}\rho_\nu(N).
\end{eqnarray*}
Combining this with (\ref{3.24})--(\ref{3.26}) we derive (\ref{2.15}) completing the proof of Theorem \ref{thm2.6}.

\section{Erd\" os--R\' enyi type law of large numbers for iterated sums and integrals}\label{sec5}\setcounter{equation}{0}
As an application of Theorem \ref{thm2.1} we will derive in this section a version of the Erd\" os--R\' enyi law of
large numbers (see \cite{ER}) for iterated sums and integrals which requires additional assumptions. 
A part of our proof will be a slight modification of the one from \cite{DK} and we will pick our additional assumptions 
from there.

As before, $\{\xi(k)\}_{-\infty<k<\infty}$ and $\{\xi(t)\}_{-\infty<t<\infty}$ will be discrete and continuous 
time $d$-dimensional stationary stochastic processes but now we assume also that they are bounded. Let $\cF_n^m$ and
$\cF_s^t$ be $\sig$-algebras generated by $\{\xi(k),\, n\leq k\leq m\|$ and by $\{\xi(u),\, s\leq u\leq t\}$, respectively.
We will proceed with the discrete time case while the continuous time case is dealt with in the same way.
As in \cite{DK} we assume that the above processes are $\psi$-mixing which means that
\begin{equation}\label{5.0}
\psi(n)=\sup_{k,\, A\in\cF_{-\infty}^k,\, B\in\cF_{k+n}^\infty,\, P(A)P(B)\ne 0}\big\vert\frac 
{P(A\bigcap B)}{P(A)P(B)}-1\big\vert\to 0\,\,\,\mbox{as}\,\,\, n\to\infty.
\end{equation}
Morover, following \cite{DK} we assume that the $\psi$-mixing coefficient $\psi(n)$ decays in $n$ exponentially fast.
It follows from
Theorem 3.3 in \cite{DK} and Section 6.4 in \cite{DZ} that under the above conditions the processes $\xi$ satisfy the large deviations principle with a rate function $I$. This implies also that each coordinate process $\xi_i, i=1,...,d$ satisfies the large deviations principle with a rate function $I_i$.
\begin{theorem}\label{thm5.1} Under the above conditions almost surely,
\begin{equation}\label{5.2}
\lim_{n\to\infty}\max_{0\leq m\leq n-\ell_n}I_{i_\nu}(\al)\frac {\Sig^{i_1,...,i_\nu}
(m+\ell_n)-\Sig^{i_1,...,i_\nu}(m)}{n^{\nu-1}\log n}=\frac {\al}{(\nu-1)!}\prod_{j=1}^{\nu-1}Q_{i_j}
\end{equation}
for any $1\leq i_1,...,i_\nu\leq d$ and $\al\in (0,c^+_{i_\nu})$ where $\ell_n=[(\log n)/I_{i_\nu}(\al)]$ and
$c^+_{i_\nu}=\lim_{n\to\infty}ess\sup n^{-1}\sum_{k=1}^n\xi_{i_\nu}(k)$.
\end{theorem}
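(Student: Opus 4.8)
The plan is to combine the iterated ergodic theorem (Theorem~\ref{thm2.1}) with the classical Erd\"os--R\'enyi law of large numbers applied to the last coordinate process $\xi_{i_\nu}$, exploiting the recursive structure of the signature. The starting point is the identity
\[
\Sig^{i_1,\dots,i_\nu}(m+\ell_n)-\Sig^{i_1,\dots,i_\nu}(m)
=\sum_{m\leq k<m+\ell_n}\xi_{i_\nu}(k)\,\Sig^{i_1,\dots,i_{\nu-1}}(k),
\]
which isolates the increment over the window $[m,m+\ell_n)$ and expresses it as a weighted sum of the last factor $\xi_{i_\nu}$ against the lower-order signature $\Sig^{i_1,\dots,i_{\nu-1}}(k)$. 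The key heuristic is that on such a window, where $k$ ranges near $m\leq n$, Theorem~\ref{thm2.1} (applied $P$-a.s.\ via Corollary~\ref{cor2.2}) gives $\Sig^{i_1,\dots,i_{\nu-1}}(k)\approx \tfrac{k^{\nu-1}}{(\nu-1)!}\prod_{j=1}^{\nu-1}Q_{i_j}$, so the factor $\Sig^{i_1,\dots,i_{\nu-1}}(k)$ is essentially a deterministic profile of order $n^{\nu-1}$, and all the large-deviation fluctuations enter only through the partial sums of $\xi_{i_\nu}$.

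First I would make this precise by writing $\Sig^{i_1,\dots,i_{\nu-1}}(k)=\bigl(\tfrac{Q_{i^{(\nu-1)}}}{(\nu-1)!}+\del_{i^{(\nu-1)}}(k)/k^{\nu-1}\bigr)k^{\nu-1}$ with $\del_{i^{(\nu-1)}}(k)\to 0$ a.s., and splitting the increment into a main term
\[
\frac{Q_{i^{(\nu-1)}}}{(\nu-1)!}\sum_{m\leq k<m+\ell_n} k^{\nu-1}\xi_{i_\nu}(k)
\]
plus an error term carrying the $\del$-factor. Second, since $\ell_n=[(\log n)/I_{i_\nu}(\al)]=o(n)$ while $m+\ell_n\leq n$, on the window one has $k^{\nu-1}=n^{\nu-1}(1+o(1))$ uniformly, so after dividing by $n^{\nu-1}\log n$ the main term reduces to
\[
\frac{Q_{i^{(\nu-1)}}}{(\nu-1)!}\cdot\frac{1}{\log n}\sum_{m\leq k<m+\ell_n}\xi_{i_\nu}(k)\,(1+o(1)).
\]
The classical Erd\"os--R\'enyi law from \cite{ER} (in the $\psi$-mixing, exponential-decay setting as in \cite{DK}, using the large deviations principle with rate $I_{i_\nu}$ noted just before the statement) states that
\[
\lim_{n\to\infty}\max_{0\leq m\leq n-\ell_n}\frac{1}{\ell_n}\sum_{m\leq k<m+\ell_n}\xi_{i_\nu}(k)=\al
\quad\text{a.s.},
\]
and since $\ell_n/\log n\to 1/I_{i_\nu}(\al)$, multiplying by $I_{i_\nu}(\al)$ turns the window-average into $\al$ in the limit. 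Combining these gives the claimed value $\frac{\al}{(\nu-1)!}\prod_{j=1}^{\nu-1}Q_{i_j}$.

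The main obstacle is that the maximum over $m$ and the pointwise ergodic approximation $\del_{i^{(\nu-1)}}(k)\to 0$ interact: the Erd\"os--R\'enyi scaling limit holds for the \emph{maximum} over all windows, so I must control the error term \emph{uniformly} in $m$, not just for fixed $m$. The plan is to bound
\[
\frac{1}{n^{\nu-1}\log n}\Bigl|\sum_{m\leq k<m+\ell_n}\xi_{i_\nu}(k)\,k^{\nu-1}\del_{i^{(\nu-1)}}(k)\Bigr|
\leq \frac{\|\xi_{i_\nu}\|_\infty\,\ell_n}{\log n}\,\sup_{k\geq m}|\del_{i^{(\nu-1)}}(k)|\,(1+o(1)),
\]
using boundedness of $\xi$; since $\ell_n/\log n$ is bounded and $\sup_{k\geq m}|\del_{i^{(\nu-1)}}(k)|\to 0$ as $m\to\infty$, the only remaining difficulty is the windows with small $m$. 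These contribute a bounded number of terms of size $O(n^{\nu-1})$, which after division by $n^{\nu-1}\log n$ vanish; a careful two-regime split (small $m$ versus large $m$) handles the uniformity. A final routine check is that replacing $k^{\nu-1}$ by $n^{\nu-1}$ over the window introduces only an $o(1)$ relative error because $\ell_n=o(n)$, and that the sign conventions and the requirement $\al\in(0,c^+_{i_\nu})$ ensure $I_{i_\nu}(\al)>0$ so that $\ell_n$ is well defined and positive.
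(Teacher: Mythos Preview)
Your overall strategy---reduce to the recursion, replace $\Sig^{i_1,\dots,i_{\nu-1}}(k)$ by its ergodic limit, and invoke the Erd\H os--R\'enyi law for $\xi_{i_\nu}$---matches the paper's. But the key step fails: you claim that on the window $[m,m+\ell_n)$ one has $k^{\nu-1}=n^{\nu-1}(1+o(1))$ uniformly ``because $\ell_n=o(n)$ and $m+\ell_n\le n$.'' This is false. The condition $\ell_n=o(n)$ only says the window is short; it says nothing about where the window sits. Since $m$ ranges over all of $[0,n-\ell_n]$, the window can be $[0,\ell_n)$ or $[n/2,n/2+\ell_n)$, where $k^{\nu-1}/n^{\nu-1}$ is far from $1$. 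So your main term does \emph{not} reduce to $\frac{Q_{i^{(\nu-1)}}}{(\nu-1)!}\cdot\frac{1}{\log n}\sum_{m\le k<m+\ell_n}\xi_{i_\nu}(k)$, and you cannot simply quote the classical Erd\H os--R\'enyi law.

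What is actually needed is a \emph{weighted} Erd\H os--R\'enyi statement: with weight $m^{\nu-1}/n^{\nu-1}$ (or $k^{\nu-1}/n^{\nu-1}$) inside the maximum, the limit is still $\al$. The upper bound is easy (the weight is at most $1$, so it follows from the classical upper bound in \cite{DK}). The lower bound is the real work: the classical law tells you the unweighted maximum is near $\al\ell_n$, but that maximum could be attained at small $m$, where the weight kills it. The paper handles this by re-running the Borel--Cantelli lower-bound argument from \cite{DK}, but restricting to windows with $m\ge n(1+\ka)^{-1/(\nu-1)}$ for small $\ka>0$; on that range the weight is at least $(1+\ka)^{-1}$, and there are still enough disjoint windows (a positive proportion of $n/\ell_n$) to force, via the large deviations lower bound and Borel--Cantelli, that some window sum exceeds $(\al-\ve)\ell_n$. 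Your two-regime split for the $\del$-error term is fine, but you need the same idea---and a genuine probabilistic argument, not a deterministic $o(1)$---for the main term.
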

\begin{proof}
 Employing Theorem \ref{thm2.1} we have almost surely
\begin{eqnarray}\label{5.3}
&\lim_{n\to\infty}\max_{0\leq m\leq n-\ell_n}I_{i_\nu}(\al)\frac {\Sig^{i_1,...,i_\nu}(m+\ell_n)-\Sig^{i_1,...,i_\nu}(m)}
{n^{\nu-1}\log n}\\
&=\lim_{n\to\infty}\max_{0\leq m\leq n-\ell_n}I_{i_\nu}(\al)\frac{\sum_{k=m+1}^{m+\ell_n}\xi_{i_\nu}(k)
\Sig^{i_\nu,...,i_{\nu-1}}(k-1)}{n^{\nu-1}\log n}\nonumber\\
&=\frac 1{(\nu-1)!}\prod_{j=1}^{\nu-1}Q_{i_j}\lim_{n\to\infty}\max_{0\leq m\leq n-\ell_n}\frac {I_{i_\nu}(\al)}
{n^{\nu-1}\log n}\sum_{k=m+1}^{m+\ell_n}\xi_{i_\nu}(k)\nonumber\\
&=\frac 1{(\nu-1)!}\prod_{j=1}^{\nu-1}Q_{i_j}\lim_{n\to\infty}\max_{n^\del\leq m\leq n-\ell_n}\frac {I_{i_\nu}(\al)m^{\nu-1}}
{n^{\nu-1}\log n}\sum_{k=m+1}^{m+\ell_n}\xi_{i_\nu}(k)\nonumber
\end{eqnarray}
for any $\del\in(0,1)$.

We will show that almost surely
\begin{equation}\label{5.4}
\lim_{n\to\infty}\max_{n^\del\leq m\leq n-\ell_n}\frac {I_{i_\nu}(\al)m^{\nu-1}}
{n^{\nu-1}\log n}\sum_{k=m+1}^{m+\ell_n}\xi_{i_\nu}(k)=\al
\end{equation}
which together with (\ref{5.3}) will yield (\ref{5.2}). We proceed similarly to \cite{DK} with a few modifications.
Let $\ve>0$ and define the event
\[
A_n(\ve)=\{\max_{0\leq m\leq n-l_n}m^{\nu-1}\sum_{k=m+1}^{m+\ell_n}\xi_{i_\nu}(k)\geq\ell_n(\al-\ve)n^{\nu-1}\}.
\]
Clearly,
\[
A_n(\ve)\subset\tilde A_n(\ve)=\{\max_{0\leq m\leq n-\ell_n}\sum_{k=m+1}^{m+\ell_n}\xi_{i_\nu}(k)\geq\ell_n(\al-\ve)\}.
\]
Relying on the Borel--Cantelli lemma it was shown in \cite{DK} that the event $\tilde A(\ve)$, and so the event 
$A(\ve)$, may occur only finitely many times in $n$. Since $\ve>0$ is arbitrary we conclude from here that almost surely,
\begin{equation}\label{5.5}
\limsup_{n\to\infty}\max_{0\leq m\leq n-\ell_n}\frac {I_{i_\nu}(\al)m^{\nu-1}}{n^{\nu-1}\log n}\sum_{k=m+1}^{m+\ell_n}\xi_{i_\nu}(k)\leq\al.
\end{equation}

Next, we will obtain the lower bound. Set
\[
B_n(\ve)=\{\max_{0\leq m\leq n-\ell_n}m^{\nu-1}\sum_{k=m+1}^{m+\ell_n}\xi_{i_\nu}(k)\leq\ell_n(\al-\ve)n^{\nu-1}\}
\]
and
\[
C_{n,m}(\ve)=\{ m^{\nu-1}\sum_{k=m+1}^{m+\ell_n}\xi_{i_\nu}(k)\leq\ell_n(\al-\ve)n^{\nu-1}\}.
\]
Then for $n-\ell_n>q$, where $q>0$ will be large enough,
\[
B_n(\ve)=\bigcap_{m=0}^{n-\ell_n}C_{n,m}(\ve)\subset\bigcap_{s=0}^{r_n}C_{n,s(\ell_n+q)}
\]
where we set $r_n=[\frac {n-\ell_n}{\ell_n+q}]$. Since $C_{n,m}(\ve)\in\cF_{m+1}^{m+\ell_n}$ we have by (\ref{5.0}) that
\begin{equation}\label{5.6}
P(B_n(\ve))\leq P\big(\bigcap_{s=0}^{r_n}C_{n,s(\ell_n+q}\big)\leq (1+\psi(q))^{r_n}\prod_{s=0}^{r_n}P(C_{n,s(\ell_n+q)}).
\end{equation}

By the assamption there exists $\La>0$ such that for all $q>0$ large enough
\begin{equation}\label{5.7}
\psi(q)\leq\exp(-\La q)
\end{equation}
and we choose $q=q_n=[\gam\log n]$ where $\gam\La>1$. It follows from our assumptions (see the beginning of the proof
of Theorem 4.1 in \cite{DK}) that $I_{i_\nu}(u)$ is strictly increasing in $u>0$, and so there exists $\del>0$ such that
\begin{equation}\label{5.8}
(I_{i_\nu}(\al-\frac \ve 2)+\del)/I_{\i_\nu}(\al)<1-\del.
\end{equation}
Let $C^c_{s(\ell_n+q)}$ be the complement of $C_{s(\ell_n+q)}$, then using the stationarity of the process $\xi$ we have
\begin{eqnarray}\label{5.9}
&\quad 1-P(C_{s(\ell_n+q})=P(C^c_{s(\ell_n+q})=P\{\sum_{k=s(\ell_n+q)+1}^{s(\ell_n+q)+\ell_n}\xi(k)>(s(\ell_n+q))^{-(\nu-1)}\\
&\times\ell_n(\al-\ve)n^{\nu-1}\}=P\{\ell_n^{-1}\sum_{k=1}^{\ell_n}\xi(k)>(s(\ell_n+q))^{-(\nu-1)}(\al-\ve)n^{\nu-1}\}.\nonumber
\end{eqnarray}

Next, chose $s$ so that $(s(\ell_n+q))^{-(\nu-1)}n^{\nu-1}\leq 1+\ka$ where $\ka>0$ is such that $(1+\ka)(\al-\ve)\leq\al-\frac \ve 2$, i.e. $s=s(n)\geq \frac n{\ell_n+q}(1+\ka)^{-\frac 1{\nu-1}}$. For such $s$ we apply the large deviations principle to $C^c_{s(\ell_n+q)}$ to obtain from (\ref{5.8}) and (\ref{5.9}) that for large $n$,
\begin{eqnarray}\label{5.10}
&1-P(C_{s(\ell_n+q})\geq P\{\ell_n^{-1}\sum_{k=1}^{\ell_n}\xi(k)>(\al-\ve)(1+\ka)\}\geq \\
&P\{\ell_n^{-1}\sum_{k=1}^{\ell_n}\xi(k)>(\al-\frac \ve 2)\}\geq\nonumber\\
&\exp(-\ell_n(I(\al-\frac \ve 2)+\del))\geq\exp(-\ell_nI(\al)(1-\del))\geq\exp(-(1-\del)\log n).\nonumber
\end{eqnarray}
Hence, by (\ref{5.7}) and (\ref{5.10}) for large $n$,
\begin{eqnarray}\label{5.11}
&P(B_n(\ve))\leq (1+\psi(q))^{r_n+1}\prod_{\frac n{\ell_n+q}(1+\ka)^{-\frac 1{\nu-1}}\leq s\leq r_n}P(C_{s(\ell_n+q)})\\
&\leq(1+e^{-\La\gam\log n})^{r_n+1}(1-\exp(-(1-\del)\log n))^{(1-(1+\ka)^{-\frac 1{\nu-1}})(r_n-1)}\nonumber\\
&\leq(1+n^{-1})^{r_n+1}(1-n^{-1+\del})^{(1-(1+\ka)^{-\frac 1{\nu-1}})(r_n-1)}\nonumber
\end{eqnarray}
where in the last expression and below we take $r_n=[\frac {n-\ell_n}{\ell_n+\gam\log n}]$. Observe that for large $n$,
\begin{equation}\label{5.12}
(1+n^{-1})^{r_n+1}\leq (1+n^{-1})^{n+1}<3.
\end{equation}
Set $\eta=1-(1+\ka)^{-\frac 1{\nu-1}}>0$. Then for large $n$,
\begin{equation}\label{5.13}
(1-n^{-1+\del})^{\eta (r_n-1)}=(1+\frac 1{(n-n^\del)n^{-\del}})^{-(n-n^\del)n^{-\del}(\frac {n^\del\eta (r_n-1)}{n-n^\del})}
\leq Ce^{-n^{\del/2}}
\end{equation}
for some constant $C>0$, since $\exp(-n^{\del/2}\frac {\eta (r_n-1)}{n-n^\del})\to 0$ as $n\to\infty$.

Now we obtain from (\ref{5.11})--(\ref{5.13}) that
\[
\sum_{n=1}^\infty P(B_n(\ve))<\infty
\]
and by the Borel--Cantelly lemma the event $B_n(\ve)$ can occur only finitely many times in $n$, and so
\begin{equation}\label{5.14}
\liminf_{n\to\infty}\max_{0\leq m\leq n-\ell_n}
\frac {m^{\nu-1}\sum_{k=m+1}^{m+\ell_n}\xi(k)}{\ell_nu^{\nu^{\nu-1}}}\geq\al-\ve.
\end{equation}
Since $\ve>0$ is arbitrary we obtain (\ref{5.2}) of Theorem \ref{thm5.1} from (\ref{5.3}), (\ref{5.5}) and (\ref{5.14}).
\end{proof}


\begin{thebibliography}{Bow75}

\itemsep=\smallskipamount

\bibitem{ABDGHW} J. Aaronson, R. Burton, H. Dehling, D. Gilat, T. Hill, B. Weiss {\em Strong laws for
$L$- and $U$-statistics}, Trans. AMS 348 (1996), 2845--2866.


\bibitem{BFT} C. Bayer, P. Friz and N, Tapia, {\em Stability of deep neural networks via
discrete rought paths}, SIAM J. Math. Data Sci. 5:1 (2023), 50--76.










\bibitem{DET} J. Diehl, K. Ebrahimi-Fard and Nicolas Tapia, {\em Generalized iterated-sums signatores},
J. Algebra 632 (2023), 801--824.


\bibitem{DGV} H. Dehling, D. Giraudo, D. Volny, {\em Some remarks on the ergodic theorem for $U$-statistics},
Compt. Rend. Math. 361 (2023), 1511--1519.


\bibitem{DZ} A. Dembo and O. Zeitouni, {\em Large Deviations Techniques and Applications}, Springer, Berlin, 2010.


\bibitem{DK} M. Denker and Z. Kabluchko, {\em An Erd\" os--R\' enyi law for mixing processes}, Probab. and
 Math. Stat. 27 (2007), 139--149.




\bibitem{ER} P. Erd\" os and A. R\' enyi, {\em On a new law of large numbers}, J. Anal. Math. 23 (1970), 103--111.


\bibitem{FZ} P.K. Friz and H. Zhang, {\em Differential equations driven by rough paths with jumps},
J. Diff. Equat. 264 (2018), 6226--6301.



\bibitem{G24} D. Giroudo, {\em Some notes on ergodic theorem for $U$-statistics of order $m$ for stationary
 and not necessarily ergodic sequences}, Stat.\& Probab. Lett. 210 (2024), 110117.







\bibitem{Kal} O. Kallenberg, {\em Foundations of Modern Probability}, Springer, New York, 2002.



\bibitem{Ki23} Yu. Kifer, {\em Limit theorems for signatures}, arXiv: 2306.13376.


\bibitem{Ki25} Yu. Kifer, {\em Almost sure approximations and laws of iterated logarithm for
 signatures}, Stoch. Proc. Appl. 182 (2025), 104576.


\bibitem{Or} W. Orliez, {\em Zur allgemeinen Limitierungstheorie}, Tohoku Math. J. 26 (1926), 233--237.



\bibitem{Wal} P. Walters, {\em An Introduction to Ergodic Theory}, Springer, New York, 1982.


\end{thebibliography}
\end{document}